\newtheorem{theorem}{Theorem}
\newtheorem{lemma}{Lemma}
\newtheorem{example}{Example}
\newtheorem{remark}{Remark}
\begin{document}
\title{Graph-Directed Fractal Interpolation Functions \date{}}

\author{Ali DENİZ \footnote{Department of Mathematics, Anadolu University, Eskisehir, Turkey. Email: adeniz@anadolu.edu.tr } \  and Yunus ÖZDEMİR \footnote{Department of Mathematics, Anadolu University, Eskisehir, Turkey. Email: yunuso@anadolu.edu.tr }}

\maketitle
\begin{abstract}
It is known that there exists a function interpolating a given data set such that the graph of the function is the attractor of an iterated function system which is called fractal interpolation function. We generalize the notion of fractal interpolation function to the graph-directed case and prove that for a finite number of data sets there exist interpolation functions each of which interpolates corresponding data set in $\mathbb{R}^2$ such that the graphs of the interpolation functions are attractors of a graph-directed iterated function system.
\end{abstract}

\textbf{Keywords:} fractal interpolation function, iterated function system, graph-directed iterated function system

\textbf{MSC:} 28A80, 28A99

\section{Introduction}
Barnsley introduced fractal interpolation functions (FIF) using iterated function system (IFS) theory which is an important part of fractals (see \cite{BarnsleyMak}, \cite{BarnsleyBook}, \cite{navassurvey} for further information).  He showed that there exists a function interpolating a given data set such that the graph of this function is the attractor of an IFS. In recent decades it has been widely used in various fields such as approximation theory, image compression, modeling of signals and in many other scientific areas (\cite{chen},\cite{malysz},\cite{navascue1},\cite{navascue2},\cite{xiu}).

Let us summarize the notions  of IFS and FIF firstly. An IFS  is a finite collection of contraction mappings  \mbox{ $\varphi_i: X \to X \, (i=1,...,n)$ } on a complete metric space $X$. It is known that there exists a unique nonempty compact set $A$ satisfying \[A=\bigcup \limits_{i=1}^{n} \varphi_i(A) \subset X \] which is called the attractor of the IFS (see \cite{Hut}).

A data set is a set of points
\begin{equation*}
\mathfrak{D}=\{(x_0,F_0),(x_1,F_1),\dots,(x_N,F_N)\}\subset \mathbb{R}^2
\end{equation*}
such that $x_0<x_1<\cdots<x_N$ with $N\geq 2$. An interpolation function corresponding to the data set $\mathfrak{D}$ is a continuous function \mbox{$f:[x_0,x_N]\to \mathbb{R}$} whose graph passes through the points of $\mathfrak{D}$, i.e. $f(x_i)=F_i$ for all $i=0,1,\dots,N$.

Barnsley constructed an iterated function system whose attractor is an interpolation function corresponding to the data set $\mathfrak{D}$ as follows:

 Let $\omega_i: \mathbb{R}^2 \to \mathbb{R}^2$ be affine functions of the form \[\omega_i(x,y)= \left(
   \begin{array}{cc}
     a_i & 0 \\
     c_i & d_i\\
   \end{array}
 \right)
  \left(
    \begin{array}{c}
      x\\
      y \\
    \end{array}
  \right)+\left(
    \begin{array}{c}
      e_i\\
      f_i \\
    \end{array}
  \right)
  \] where the $a_i,c_i,d_i,e_i,f_i$ are real numbers for $i=1,2,...,N$. Using the constraints \[\omega_i(x_0,F_0)=(x_{i-1},F_{i-1}) \ \mbox{ and }\ \omega_i(x_N,F_N)=(x_i,F_i),\]
  one can obtain the coefficients $a_i,c_i,d_i,e_i,f_i$ easily in terms of data points $\{(x_i,F_i)\}_{i=0}^N$ and the parameters $d_i$ (so-called vertical scaling factor) for $i=1,2,...,N$ as
  \begin{eqnarray*}
  a_i&=&\frac{x_i-x_{i-1}}{x_N-x_0}\\
  e_i&=&\frac{x_N x_{i-1}-x_0x_{i}}{x_N-x_0}\\
  c_i&=&\frac{F_i-F_{i-1}}{x_N-x_0}-d_i \frac{F_N-F_{0}}{x_N-x_0}\\
  f_i&=&\frac{x_N F_{i-1}-x_0F_{i}}{x_N-x_0}-d_i \frac{x_N F_0-x_0F_N}{x_N-x_0}
  \end{eqnarray*}
  (see \cite{BarnsleyMak}, \cite{BarnsleyBook} for details).

  In \cite[Teorem 2.1, p.217]{BarnsleyBook}, it is shown that $\{ \mathbb{R}^2; \omega_1,...,\omega_N\}$ is an  iterated function system (associated with the data set $\mathfrak{D}$) in case of $|d_i|<1$ for $i=1,...,N$.

  The attractor of the IFS $\{ \mathbb{R}^2; \omega_1,...,\omega_N\}$ constructed above is the graph of an interpolation function which is called (affine) fractal interpolation function (FIF) corresponding to the data set $\mathfrak{D}$. Note that there exist infinitely many fractal interpolation functions  depending on the parameters $d_i$ with $|d_i|<1$ for $i=1,...,N$.

  \begin{example}\label{exampleifs}
  Consider the IFS (constructed as above) associated with the data set
  \[\{(0,0), (3,5), (6,4), (10,1)\}\]
  where we choose $d_1=0.25, d_2=0.5, d_3=0.25$. The IFS consists of the contractions
  \begin{eqnarray*}
  \omega_1(x,y)&=&(0.3 x, 0.475x+0.25y)\\
  \omega_2(x,y)&=&(0.3 x +3, -0.15x+0.5y+5)\\
  \omega_3(x,y)&=&(0.4 x+6, -0.325x+0.25y+4)
  \end{eqnarray*}
  and its attractor is a FIF whose graph depicted in Figure \ref{fifexample}.

  \begin{figure}[h]
  \begin{center}\includegraphics[scale=0.3]{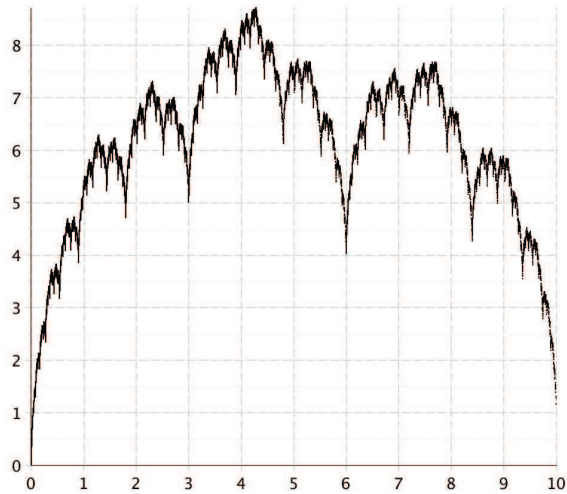}
  \caption{The attactor of the IFS $\{\mathbb{R}^2; \omega_1,\omega_2,\omega_3\}$ given in Example \ref{exampleifs}.}\label{fifexample}
  \end{center}
  \end{figure}
  \end{example}

 There is a generalization of iterated function systems, the so-called graph-directed iterated function systems (GIFS). In this work, we generalize the notion of fractal interpolation function to the graph-directed case. In modelling with fractal interpolation, graph directed fractals can have some advantages since they give us a chance to code several data sets in an interpolation function. An interpolant obtained as graph directed fractal interpolation function contains some information from each of the data sets under suitable conditions.

We briefly summarize the graph-directed iterated function systems.

 Let $\mathcal{G}=(V,E)$ be a directed graph where $V$ is the set of vertices and $E$ is the set of edges. For $\alpha, \beta \in V$, the set of edges from $\alpha$ to $\beta$ is denoted by $E^{\alpha \beta}$ and its elements by $e_i^{\alpha \beta}$,  $i=1,2,...,K^{\alpha\beta}$ where $K^{\alpha\beta}$ denotes the number of elements of $E^{\alpha\beta}$. We assume that $\displaystyle E^{\alpha}:=\bigcup\limits_{\beta \in V} E^{\alpha \beta}\neq \emptyset$.

Let $\{ X^\alpha \,| \, \alpha \in V \}$ be a finite collection of complete metric spaces and $\varphi_i^{\alpha\beta}: X^\beta\to X^\alpha$ be a contraction mapping corresponding to the edge $e_i^{\alpha \beta}$ (but in the opposite direction of $e_i^{\alpha \beta}$) for $i=1,2,\dots,K^{\alpha\beta}$ and $\alpha, \beta \in V$. It can be shown that there exist a unique family of nonempty compact sets $A^\alpha\subset X^\alpha$ such that
\[
A^\alpha=\bigcup_{\beta\in V} \bigcup_{i=1}^{K^{\alpha\beta}} \varphi_i^{\alpha\beta}(A^{\beta})
\]
(see \cite{Edgar}). The system $\left\{  X^\alpha; \varphi_i^{\alpha\beta}\right\}$ is called a graph-directed iterated function system (GIFS) realizing the graph  $\mathcal{G}$ and $  A^\alpha$ 's $(\alpha \in V)$ are called the attractors of the system (see \cite{Edgar},\cite{MaWil}  for more details).

 \begin{example}
 Let $\mathcal{G}=(V,E)$ be a directed graph with $V=\{1\emph{},2\}$ as shown in Figure~\ref{exgraph}(a).
 Let $X^1$ be the unite square and $X^2$ be the right triangle as indicated in Figure \ref{exgraph}(b). Consider the contractions $\omega_i^{\alpha \beta}: X^\beta
\rightarrow X^\alpha$ ($\alpha,\beta  \in V, \ i=1,...,K^{\alpha\beta}$) corresponding to each edge of the graph (in the opposite direction). A pictorial description of the maps are shown in  Figure \ref{exgraph}(c), the explicit expressions of them can be found in \cite{Demir}.  The attractors of this graph-directed system realizing the graph  $\mathcal{G}$ look like as given in Figure \ref{exgraph}(d).
\begin{figure}[h]
\begin{center}
\centering \includegraphics[scale=0.9]{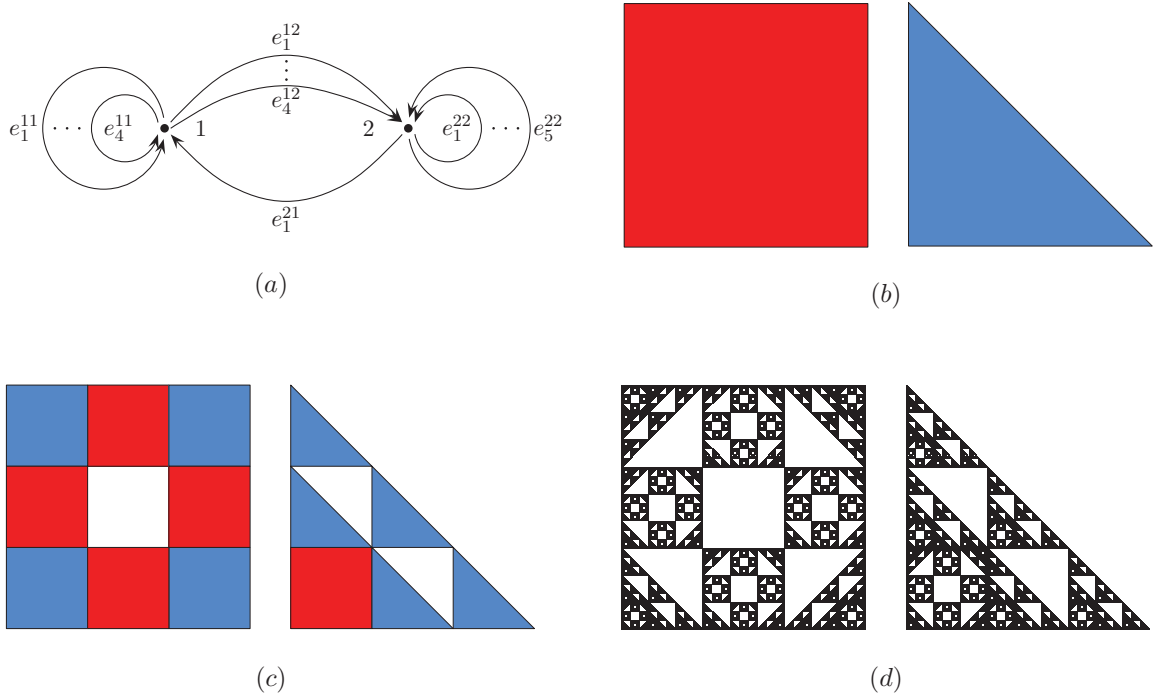}
\caption{(a) A directed graph $\mathcal{G}=(V,E)$ where $V=\{1,2\}$.
(b) The spaces $X^1$ and $X^2$. (c) Pictorial description of the  contractions $\omega_i^{\alpha,\beta}$ which is determined by a copy of $X^\beta$ in $X^\alpha$ for all $ \alpha,\beta  \in V, \ i=1,...,K^{\alpha\beta}$.
(d) Sixth stage of the iteration of the GIFS.} \label{exgraph}
\end{center}
\end{figure}
\end{example}

\section{Construction of Graph-Directed Fractal Interpolation Functions}\label{construction}
In this section, we will construct a graph-directed iterated function system for a finite number of data sets such that each attractor of the system is the graph of an interpolation function (which interpolates a corresponding data set) what we call a graph-directed fractal interpolation function.

Let
\begin{equation}\label{datasets}
\mathfrak{D}^{\alpha}=\{(x_0^\alpha,F_0^\alpha), (x_1^\alpha,F_1^\alpha), \dots, (x_{N_\alpha}^\alpha,F_{N_\alpha}^\alpha)\} \end{equation}
be data sets in $\mathbb{R}^2$ with $N_{\alpha}\geq 2$ for all $\alpha=1,2,\dots,n$.
We assume that these data sets satisfy
\begin{equation}\label{conditionondatas}
\dfrac{x_{j}^\beta-x_{j-1}^\beta}{x_{N_\alpha}^\alpha-x_0^\alpha}<1
\end{equation}
for all $\alpha\neq \beta, \, \alpha,\beta=1,\dots,n$ and $j=1,\dots,N_\beta$.

\begin{remark}
The condition (\ref{conditionondatas}) on the data sets will provide that the maps to be constructed further in this section are contractions. Note that if the original data sets do not satisfy the condition (\ref{conditionondatas}), one can make the data sets to satisfy the condition by adding new points artificially. Instead of changing the original data sets by adding artificial points, we prefer the data sets to satisfy the condition (\ref{conditionondatas}).
\end{remark}

Now we can state the following theorem as the main result of this paper.
\begin{theorem}\label{maintheorem}
Let $\mathfrak{D}^{\alpha}=\{(x_0^\alpha,F_0^\alpha),(x_1^\alpha,F_1^\alpha),\dots,(x_{N_\alpha}^\alpha,F_{N_\alpha}^\alpha)\}$ be data sets in $\mathbb{R}^2$ for $\alpha=1,\dots,n$ satisfying (\ref{conditionondatas}).  Then there exists a GIFS, with attractors $A^\alpha$ ($\alpha=1,2,\dots,n$), such that $A^\alpha$ is the graph of a function which interpolates the data set $\mathfrak{D}^{\alpha}$ for each $\alpha$.
\end{theorem}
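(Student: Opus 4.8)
The plan is to carry out Barnsley's shear-map construction simultaneously at every vertex, gluing the pieces along the edges of a graph that we are free to design. Put $V=\{1,\dots,n\}$ and, for each vertex $\alpha$ and each index $j\in\{1,\dots,N_\alpha\}$, select a target vertex $\beta=\sigma(\alpha,j)\in V$ (any selection works, and different selections give different interpolants); this determines the edge set, with one edge from $\alpha$ to $\sigma(\alpha,j)$ for each pair $(\alpha,j)$ and $K^{\alpha\beta}=\#\{j:\sigma(\alpha,j)=\beta\}$. To the edge indexed by $(\alpha,j)$ we attach the affine map $\omega_j^\alpha:\mathbb{R}^2\to\mathbb{R}^2$,
\[
\omega_j^\alpha(x,y)=\bigl(a_j^\alpha x+e_j^\alpha,\; c_j^\alpha x+d_j^\alpha y+f_j^\alpha\bigr),
\]
subject to the two incidence conditions $\omega_j^\alpha(x_0^\beta,F_0^\beta)=(x_{j-1}^\alpha,F_{j-1}^\alpha)$ and $\omega_j^\alpha(x_{N_\beta}^\beta,F_{N_\beta}^\beta)=(x_j^\alpha,F_j^\alpha)$, where $\beta=\sigma(\alpha,j)$. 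Exactly as in the classical case these four scalar equations determine $a_j^\alpha,e_j^\alpha,c_j^\alpha,f_j^\alpha$ uniquely in terms of the data sets and the free parameters $d_j^\alpha$; in particular $a_j^\alpha=(x_j^\alpha-x_{j-1}^\alpha)/(x_{N_\beta}^\beta-x_0^\beta)$.

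Second, I would check that $\{\mathbb{R}^2;\omega_j^\alpha\}$ is a GIFS realizing this graph. The horizontal ratio satisfies $0<a_j^\alpha<1$: when $\beta\ne\alpha$ this is precisely hypothesis (\ref{conditionondatas}) applied to the ordered pair of distinct indices (with $\alpha,\beta$ interchanged, which is also covered since the condition is assumed for all such pairs), and when $\beta=\alpha$ it holds automatically because $[x_{j-1}^\alpha,x_j^\alpha]$ is a proper subinterval of $[x_0^\alpha,x_{N_\alpha}^\alpha]$ once $N_\alpha\ge2$. Equip each copy $X^\alpha=\mathbb{R}^2$ with the metric $d_\theta\bigl((x,y),(x',y')\bigr)=|x-x'|+\theta|y-y'|$; a one-line estimate gives that $\omega_j^\alpha$ is $d_\theta$-Lipschitz with constant at most $\max\bigl(|d_j^\alpha|,\,|a_j^\alpha|+\theta|c_j^\alpha|\bigr)$. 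Since there are only finitely many maps, first imposing $|d_j^\alpha|<1$ and then choosing $\theta>0$ small makes every such constant $<1$ at once, so all maps are contractions. By the existence theorem for GIFS (\cite{Edgar}) there is then a unique family of nonempty compact sets $A^\alpha$ with $A^\alpha=\bigcup_{\beta\in V}\bigcup_{i=1}^{K^{\alpha\beta}}\omega_i^{\alpha\beta}(A^\beta)$.

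Third, the main work is to show each $A^\alpha$ is the graph of a function interpolating $\mathfrak{D}^\alpha$. For this I would set up the graph-directed analogue of the Read--Bajraktarevi\'c operator. Let $G^\alpha$ be the set of continuous $g:[x_0^\alpha,x_{N_\alpha}^\alpha]\to\mathbb{R}$ with $g(x_0^\alpha)=F_0^\alpha$ and $g(x_{N_\alpha}^\alpha)=F_{N_\alpha}^\alpha$; it is a closed subset of $C[x_0^\alpha,x_{N_\alpha}^\alpha]$, hence complete under the sup-metric, and $\prod_\alpha G^\alpha$ is complete under $d(\mathbf{g},\mathbf{h})=\max_\alpha\|g^\alpha-h^\alpha\|_\infty$. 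Writing $L_j^\alpha(x)=a_j^\alpha x+e_j^\alpha$ for the invertible first component of $\omega_j^\alpha$, define $T=(T^\alpha)$ on $\prod_\alpha G^\alpha$ by
\[
(T^\alpha\mathbf{g})(x)=c_j^\alpha\,(L_j^\alpha)^{-1}(x)+d_j^\alpha\,g^{\beta}\bigl((L_j^\alpha)^{-1}(x)\bigr)+f_j^\alpha,\qquad x\in[x_{j-1}^\alpha,x_j^\alpha],\ \beta=\sigma(\alpha,j).
\]
Using the incidence conditions defining $\omega_j^\alpha$ together with the endpoint constraints built into the $G^\alpha$, one checks that consecutive pieces agree at every breakpoint $x_j^\alpha$ with common value $F_j^\alpha$; hence $T^\alpha\mathbf{g}$ is well defined, continuous, lies in $G^\alpha$, and already passes through all of $\mathfrak{D}^\alpha$. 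Because $g^\beta$ enters only through the factor $d_j^\alpha$, $T$ is a contraction with ratio at most $\max_{\alpha,j}|d_j^\alpha|<1$, so it has a unique fixed point $\mathbf{g}^\ast=(g^{\ast 1},\dots,g^{\ast n})$, and each $g^{\ast\alpha}$ interpolates $\mathfrak{D}^\alpha$.

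Finally I would identify the two fixed points. Let $\Gamma^\alpha=\{(x,g^{\ast\alpha}(x)):x\in[x_0^\alpha,x_{N_\alpha}^\alpha]\}$, a nonempty compact set. The substitution $u=L_j^\alpha(x)$ shows that $\omega_j^\alpha(\Gamma^\beta)$ is precisely the portion of $\Gamma^\alpha$ lying over $[x_{j-1}^\alpha,x_j^\alpha]$ — this is the computation that makes $T^\alpha$ the graph transform attached to the maps $\omega_j^\alpha$ at the vertex $\alpha$ — and taking the union over $j$ gives $\bigcup_{\beta\in V}\bigcup_{i=1}^{K^{\alpha\beta}}\omega_i^{\alpha\beta}(\Gamma^\beta)=\Gamma^\alpha$. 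Thus $(\Gamma^\alpha)_{\alpha\in V}$ solves the GIFS invariance system, and by the uniqueness part of \cite{Edgar} we conclude $A^\alpha=\Gamma^\alpha$ for every $\alpha$, which is the assertion. The one genuinely delicate point is the contraction step: it is exactly where condition (\ref{conditionondatas}) is needed (to force $a_j^\alpha<1$, i.e. horizontal contraction of the cross-edge maps), and it requires choosing $\theta$ after, and uniformly over, all the finitely many maps; the remaining steps are the classical affine-FIF argument performed coordinatewise over the graph, the only real cost being the bookkeeping of keeping $\beta=\sigma(\alpha,j)$ consistent in the coefficient formulas, in $T$, and in the change of variables.
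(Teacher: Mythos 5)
Your proposal is correct and follows essentially the same route as the paper: the same affine maps pinned down by the endpoint incidence conditions, the same graph-directed Read--Bajraktarevi\'c operator on the product of the complete spaces of continuous functions with prescribed endpoint values (contraction ratio $\max|d|$), and the same identification of the graphs of the fixed-point functions with the attractors via uniqueness of the GIFS invariant family. The only differences are cosmetic: you treat general $n$ directly (the paper writes out $n=2$) and you verify the contractivity of the $\omega$'s explicitly with the weighted metric $|x-x'|+\theta|y-y'|$, where the paper simply invokes Barnsley's Theorem 2.1, which contains exactly that argument.
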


We call an interpolation function whose graph is the attractor of a graph-directed iterated function system a graph-directed fractal interpolation function.

In the rest of this section we deal with the construction of a GIFS satisfying Theorem~\ref{maintheorem} and for simplicity we consider the case $n=2$. In Section \ref{main}, we prove that Theorem \ref{maintheorem} holds for a GIFS to be constructed in this section.

Let
\begin{eqnarray*}
\mathfrak{D}^1&=&\{ (x_0,F_0), (x_1,F_1), \dots, (x_N,F_N)\} \\
\mathfrak{D}^2&=&\{ (y_0,G_0),(y_1,G_1),\dots,(y_M,G_M)\}
\end{eqnarray*}
be two data sets in $\mathbb{R}^2$ with $N,M \geq 2$.
In the case of $n=2$ the condition (\ref{conditionondatas}) reduces
\begin{equation}\label{conditionondataiki}
\dfrac{x_{i}-x_{i-1}}{y_{M}-y_0}<1 \ \text{ and } \ \dfrac{y_{j}-y_{j-1}}{x_{N}-x_0}<1
\end{equation}
for all $i=1,2,\dots,N$ and $j=1,2,\dots,M$.

\noindent Clearly, one can form a directed graph  $\mathcal{G}=(V,E)$ with $V=\{1,2\}$ such that
\begin{equation}\label{ikilikosul}
K^{11}+K^{12}=N \mbox{ and } K^{21}+K^{22}=M
\end{equation}
\noindent(recall that $K^{\alpha\beta}$ is the number of elements of $E^{\alpha\beta}$ for $\alpha,\beta \in V$).
Such a graph is shown in Figure \ref{graph2ligenel}.

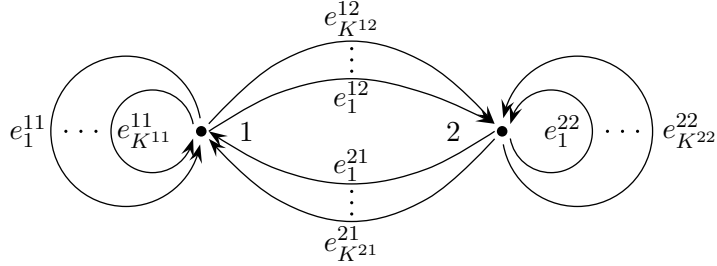
\begin{figure}[h]
\begin{center}
\begin{pspicture}(-3,-2)(7,2)
\pscurve[linewidth=0.5pt,arrowsize=5pt]{->}(0.1,0)(1,0.5)(2,0.7)(3,0.5)(3.9,0.1)
\pscurve[linewidth=0.5pt,arrowsize=5pt]{->}(0.1,0.1)(1,0.9)(2,1.2)(3,0.9)(3.9,0.1)
\psarcn[linewidth=0.5pt,arrowsize=5pt]{<-}(4.65,0){0.55}{170}{190}
\psarcn[linewidth=0.5pt,arrowsize=5pt]{<-}(5,0){1}{170}{190}
\psarcn[linewidth=0.5pt,arrowsize=5pt]{<-}(-0.65,0){0.55}{350}{10}
\psarcn[linewidth=0.5pt,arrowsize=5pt]{<-}(-1,0){1}{350}{10}
\pscurve[linewidth=0.5pt,arrowsize=5pt]{<-}(0.1,0)(1,-0.5)(2,-0.7)(3,-0.5)(3.9,0)
\pscurve[linewidth=0.5pt,arrowsize=5pt]{<-}(0.1,-0.1)(1,-0.9)(2,-1.2)(3,-0.9)(3.9,-0.1)
\qdisk(0,0){2pt} \qdisk(4,0){2pt}
\rput(-2.3,0){$e_1^{11}$}\rput(-0.75,0){$e_{K^{11}}^{11}$}
\rput(4.8,0){$e_{1}^{22}$} \rput(6.5,0){$e_{K^{22}}^{22}$}
\rput(2,1.5){$e_{K^{12}}^{12}$} \rput(2,0.45){$e_1^{12}$}
\rput(2,-1.5){$e_{K^{21}}^{21}$} \rput(2,-0.45){$e_1^{21}$}
\rput(0.6,0){$1$}
\rput(3.35,0){$2$}
\qdisk(5.8,0){0.6pt}\qdisk(5.6,0){0.6pt}\qdisk(5.4,0){0.6pt}
\qdisk(-1.8,0){0.6pt}\qdisk(-1.6,0){0.6pt}\qdisk(-1.4,0){0.6pt}
\qdisk(2,0.8){0.6pt}\qdisk(2,0.95){0.6pt}\qdisk(2,1.1){0.6pt}
\qdisk(2,-0.8){0.6pt}\qdisk(2,-0.95){0.6pt}\qdisk(2,-1.1){0.6pt}
\end{pspicture}
\caption{A directed graph $(V,E)$ where $V=\{1,2\}$.}\label{graph2ligenel}
\end{center}
\end{figure}

\begin{remark}
For the general case, the condition (\ref{ikilikosul}) on the number of edges between the vertices of the graph will be \[ K^{\alpha 1}+K^{\alpha 2}+\cdots+K^{\alpha n}=N_{\alpha}\] for $\alpha=1,2,\dots,n$.
\end{remark}
\begin{remark}
Note that we allow the case $K^{\alpha \beta}=0$. This may be lead to a situation that a data set can not be coded in an interpolant. If one construct a strongly connected graph (i.e. there is a directed path from the initial vertex $\alpha$ to the terminal vertex $\beta$ for each pair $\alpha, \beta \in V$ of the graph) satisfying the condition (\ref{ikilikosul}) then each interpolant will nestle some information from each of the data sets and thus the graph-directed self-similarity of the interpolants become more genuine.
\end{remark}

To obtain a GIFS associated with the data sets $\mathfrak{D}^{\alpha}$, $(\alpha=1,2)$ and realizing the graph $\mathcal{G}$ consider the affine functions $\omega_i^{\alpha\beta}: \mathbb{R}^2 \to \mathbb{R}^2$, $i=1,2,\dots,K^{\alpha\beta}$  of the form
\[\omega_i^{\alpha\beta}(x,y)= \left(
   \begin{array}{cc}
     a_i^{\alpha\beta} & 0 \\
     c_i^{\alpha\beta} & d_i^{\alpha\beta}\\
   \end{array}
 \right)
  \left(
    \begin{array}{c}
      x\\
      y \\
    \end{array}
  \right)+\left(
    \begin{array}{c}
      e_i^{\alpha\beta}\\
      f_i^{\alpha\beta} \\
    \end{array}
  \right)
  \] where the $a_i^{\alpha\beta},c_i^{\alpha\beta},d_i^{\alpha\beta},e_i^{\alpha\beta},f_i^{\alpha\beta}$ are all real numbers for $\alpha, \beta \in \{1,2\}$.

  \noindent We constrain these functions as follows:

\bigskip

    \noindent $\bullet$
   $ \left\{
    \begin{array}{l}
    w_i^{11}(x_0,F_0)=(x_{i-1},F_{i-1}) \\
    w_i^{11}(x_N,F_N)=(x_{i},F_{i})
    \end{array} \right. $
     for $i=1,2,\dots,K^{11}$

\bigskip

    \noindent $\bullet$
    $ \left\{
    \begin{array}{l}
    w_{i-K^{11}}^{12}(y_0,G_0)=(x_{i-1},F_{i-1})  \\ w_{i-K^{11}}^{12}(y_M,G_M)=(x_{i},F_{i})
    \end{array} \right. $
     for $i=K^{11}+1,\dots,K^{11}+K^{12}=N$

\bigskip

    \noindent $\bullet$
    $ \left\{
    \begin{array}{l}
    w_i^{21}(x_0,F_0)=(y_{i-1},G_{i-1}) \\
    w_i^{21}(x_N,F_N)=(y_{i},G_{i})
    \end{array} \right. $
     for $i=1,2,\dots,K^{21}$

\bigskip

    \noindent $\bullet$
    $ \left\{
    \begin{array}{l}
    w_{i-K^{21}}^{22}(y_0,G_0)=(y_{i-1},G_{i-1}) \\
    w_{i-K^{21}}^{22}(y_M,G_M)=(y_{i},G_{i})
     \end{array} \right. $
     for $i=K^{21}+1,\dots,K^{21}+K^{22}=M$
     \bigskip

\noindent Note that each constraint should be applied in the case of $K^{\alpha \beta} \geq 1$.
\begin{remark}
Complying with the mapping relationship determined by the graph $\mathcal{G}$, one can rearrange the maps in an arbitrary order. Since our aim is to show the existence of a graph-directed interpolation function, we arrange the mappings according to the order of data sets.
\end{remark}

From each group of above conditions we get the following equation systems respectively:
  \begin{equation}\label{eq11}
   \mbox{ For all } i=1,2,\dots,K^{11} \ \ \left\{ \begin{array}{ccl}
   x_{i-1}&=&a_i^{11}x_0+e_i^{11}\\
   F_{i-1}&=&c_i^{11}x_0+d_i^{11}F_0+f_i^{11}\\
   x_{i}&=&a_i^{11}x_N+e_i^{11}\\
   F_{i}&=&c_i^{11}x_N+d_i^{11}F_N+f_i^{11}
  \end{array} \right.
\end{equation}

  \begin{equation}\label{eq12}
\mbox{ For all } i=K^{11}+1,\dots,N \ \ \left\{   \begin{array}{ccl}
   x_{i-1}&=&a_{i-K^{11}}^{12}y_0+e_{i-K^{11}}^{12}\\
   F_{i-1}&=&c_{i-K^{11}}^{12}y_0+d_{i-K^{11}}^{12}G_0+f_{i-K^{11}}^{12}\\
   x_{i}&=&a_{i-K^{11}}^{12}y_M+e_{i-K^{11}}^{12}\\
   F_{i}&=&c_{i-K^{11}}^{12}y_M+d_{i-K^{11}}^{12}G_M+f_{i-K^{11}}^{12}
  \end{array} \right.
\end{equation}

  \begin{equation}\label{eq21}
\mbox{ For all } i=1,2,\dots,K^{21} \ \ \left\{   \begin{array}{ccl}
   y_{i-1}&=&a_{i}^{21}x_0+e_{i}^{21}\\
   G_{i-1}&=&c_i^{21}x_0+d_i^{21}F_0+f_i^{21}\\
   y_{i}&=&a_i^{21}x_N+e_i^{21}\\
   G_{i}&=&c_i^{21}x_N+d_i^{21}F_N+f_i^{21}
  \end{array} \right.
\end{equation}

  \begin{equation}\label{eq22}
 \mbox{ For all } i=K^{21}+1,\dots,M \ \ \left\{  \begin{array}{ccl}
   y_{i-1}&=&a_{i-K^{21}}^{22}y_0+e_{i-K^{21}}^{22}\\
   G_{i-1}&=&c_{i-K^{21}}^{22}y_0+d_{i-K^{21}}^{22}G_0+f_{i-K^{21}}^{22}\\
   y_{i}&=&a_{i-K^{21}}^{22}y_M+e_{i-K^{21}}^{22}\\
   G_{i}&=&c_{i-K^{21}}^{22}y_M+d_{i-K^{21}}^{22}G_M+f_{i-K^{21}}^{22}
  \end{array} \right.
\end{equation}

Choosing $d_i^{\alpha\beta}$ as a parameter, one can easily solve the linear equation systems (\ref{eq11})--(\ref{eq22}) to obtain $a_i^{\alpha\beta},c_i^{\alpha\beta},e_i^{\alpha\beta},f_i^{\alpha\beta}$ for $\alpha,\beta \in \{1,2\}, \, i=1,2,\dots,K^{\alpha \beta}$  in terms of the data points and $d_i^{\alpha\beta}$ as follows:
\renewcommand{\arraystretch}{1.5}
\[
\begin{tabular}{cc}
$
\begin{array}{l}
a_i^{11}=\dfrac{x_i-x_{i-1}}{x_N-x_0}\\
e_i^{11}=\dfrac{x_N x_{i-1}-x_0x_{i}}{x_N-x_0}\\
c_i^{11}=\dfrac{F_i-F_{i-1}}{x_N-x_0}-d_i^{11} \dfrac{F_N-F_{0}}{x_N-x_0}\\
f_i^{11}=\dfrac{x_N F_{i-1}-x_0F_{i}}{x_N-x_0}-d_i^{11} \dfrac{x_N F_0-x_0F_N}{x_N-x_0}
\end{array}
$
&
$
\begin{array}{l}
a_i^{12}=\dfrac{x_i-x_{i-1}}{y_M-y_0}\\
e_i^{12}=\dfrac{y_M x_{i-1}-y_0x_{i}}{y_M-y_0}\\
c_i^{12}=\dfrac{F_i-F_{i-1}}{y_M-y_0}-d_i^{12} \dfrac{G_M-G_0}{y_M-y_0}\\
f_i^{12}=\dfrac{y_M F_{i-1}-y_0F_{i}}{y_M-y_0}-d_i^{12} \dfrac{y_M G_0-y_0G_M}{y_M-y_0}
\end{array}
$
\\ \\
$
\begin{array}{l}
a_i^{21}=\dfrac{y_i-y_{i-1}}{x_N-x_0}\\
e_i^{21}=\dfrac{x_N y_{i-1}-x_0y_{i}}{x_N-x_0}\\
c_i^{21}=\dfrac{G_i-G_{i-1}}{x_N-x_0}-d_i^{21} \dfrac{F_N-F_{0}}{x_N-x_0}\\
f_i^{21}=\dfrac{x_N G_{i-1}-x_0G_{i}}{x_N-x_0}-d_i^{21} \dfrac{x_N F_0-x_0F_N}{x_N-x_0}
\end{array}
$
&
$
\begin{array}{l}
a_i^{22}=\dfrac{y_i-y_{i-1}}{y_M-y_0}\\
e_i^{22}=\dfrac{y_M y_{i-1}-y_0 y_{i}}{y_M-y_0}\\
c_i^{22}=\dfrac{G_i-G_{i-1}}{y_M-y_0}-d_i^{22} \dfrac{G_M-G_0}{y_M-y_0}\\
f_i^{22}=\dfrac{y_M G_{i-1}-y_0 G_{i}}{y_M-y_0}-d_i^{22} \dfrac{y_M G_0-y_0G_M}{y_M-y_0}
\end{array}
$
\end{tabular}
\]
 \bigskip

To show that the obtained system $\left\{ \mathbb{R}^2 ;\omega_i^{\alpha\beta} \right\}$ is a GIFS, we need to ensure that $\omega_i^{\alpha\beta}$ is a contraction for each $i$, $\alpha$ and $\beta$. Although $w_i^{\alpha\beta}$ 's need not be contractions with respect to the Euclidean metric on $\mathbb{R}^2$, the following lemma guarantees that they are contractions with respect to a metric which is equivalent to the Euclidean one.

\begin{lemma}\label{lemma1}
Let $w_i^{\alpha\beta}$ be the affine functions constructed above associated with the data sets $\mathfrak{D}^{\alpha}$, $(\alpha=1,2)$ which satisfy (\ref{conditionondataiki}). Let $| d_i^{\alpha\beta}|<1$ for all $\alpha,\beta \in \{1,2\}$ and $i=1,2,\dots,K^{\alpha \beta}$. Then the system $\left\{\mathbb{R}^2, w_i^{\alpha\beta}\right\}$ is a graph-directed iterated function system associated with the data sets $\mathfrak{D}^{\alpha}$, $(\alpha=1,2)$ and realizing the graph $\mathcal{G}$.
\end{lemma}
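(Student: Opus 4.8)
The plan is to put on each copy $X^{\alpha}=\mathbb{R}^{2}$ of the plane a metric equivalent to the Euclidean one with respect to which \emph{every} map $\omega_i^{\alpha\beta}$ is a contraction with one common ratio $s<1$; once this is achieved, the conclusion is immediate from the theory recalled in the introduction. Indeed, the associated operator $\Phi$ on $\prod_{\alpha\in V}\mathcal{H}(X^{\alpha})$, acting by $(\Phi(\mathbf{K}))^{\alpha}=\bigcup_{\beta}\bigcup_{i=1}^{K^{\alpha\beta}}\omega_i^{\alpha\beta}(K^{\beta})$ and equipped with the maximum of the Hausdorff metrics, then satisfies $\rho(\Phi\mathbf{K},\Phi\mathbf{L})\le s\,\rho(\mathbf{K},\mathbf{L})$ because $h(\omega_i^{\alpha\beta}(K^{\beta}),\omega_i^{\alpha\beta}(L^{\beta}))\le s\,h(K^{\beta},L^{\beta})$ and the Hausdorff distance of finite unions is bounded by the maximum of the distances; Banach's theorem (equivalently \cite{Edgar}) yields the unique attractor family $(A^{1},A^{2})$. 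Since $K^{11}+K^{12}=N\ge2$ and $K^{21}+K^{22}=M\ge2$, each vertex has an outgoing edge, so the system genuinely realizes $\mathcal{G}$.

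First I would record the elementary but essential fact that $0<a_i^{\alpha\beta}<1$ for all admissible $i,\alpha,\beta$. For the diagonal blocks this is automatic, since $a_i^{11}=(x_i-x_{i-1})/(x_N-x_0)$ and $a_i^{22}=(y_i-y_{i-1})/(y_M-y_0)$ are ratios of a single gap of an increasing partition to its total length. For the off‑diagonal blocks $a_i^{12}=(x_i-x_{i-1})/(y_M-y_0)$ and $a_i^{21}=(y_i-y_{i-1})/(x_N-x_0)$ positivity is again clear, and the bound $<1$ is \emph{exactly} the hypothesis (\ref{conditionondataiki}). This is the only point where the standing assumption on the data sets enters, and it is the step worth isolating — it is the reason for imposing (\ref{conditionondataiki}) in the first place.

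Next, for a parameter $\theta>0$ to be fixed, set $d_{\theta}\big((x,y),(\bar x,\bar y)\big)=|x-\bar x|+\theta\,|y-\bar y|$ and use the same $d_{\theta}$ on $X^{1}$ and $X^{2}$; being a norm metric on $\mathbb{R}^{2}$, it is bi‑Lipschitz equivalent to the Euclidean one, and $(\mathbb{R}^{2},d_{\theta})$ is complete. Using the lower‑triangular form of $\omega_i^{\alpha\beta}$, a direct computation gives, for $u=(x,y)$, $v=(\bar x,\bar y)$,
\[
d_{\theta}\big(\omega_i^{\alpha\beta}(u),\omega_i^{\alpha\beta}(v)\big)\le\big(a_i^{\alpha\beta}+\theta\,|c_i^{\alpha\beta}|\big)\,|x-\bar x|+\theta\,|d_i^{\alpha\beta}|\,|y-\bar y|.
\]
Writing $a^{*}=\max a_i^{\alpha\beta}<1$, $d^{*}=\max|d_i^{\alpha\beta}|<1$ and $c^{*}=\max|c_i^{\alpha\beta}|$ (maxima over all $i,\alpha,\beta$), I would choose $s$ with $\max(a^{*},d^{*})<s<1$ and then $\theta>0$ small enough that $\theta\,c^{*}\le s-a^{*}$. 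With these choices $a_i^{\alpha\beta}+\theta|c_i^{\alpha\beta}|\le a^{*}+\theta c^{*}\le s$ and $\theta|d_i^{\alpha\beta}|\le\theta s$, so the displayed bound is at most $s\,d_{\theta}(u,v)$; hence every $\omega_i^{\alpha\beta}$ is an $s$‑contraction of $(\mathbb{R}^{2},d_{\theta})$, which is all that was needed.

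In short, the only real content is Barnsley's device of an adapted metric, here used with a single shearing parameter $\theta$ that works simultaneously for all maps and both vertices (a separate $\theta_{\alpha}$ on each $X^{\alpha}$ would also work, but is unnecessary), together with the observation that (\ref{conditionondataiki}) is precisely what keeps the horizontal ratios of the cross maps below $1$. I do not expect any genuine obstacle: the rest is the routine contraction estimate above and an appeal to the existence theorem for GIFS attractors.
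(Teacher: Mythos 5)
Your proof is correct and follows essentially the same route as the paper: the paper simply invokes condition (\ref{conditionondataiki}) together with Barnsley's Theorem 2.1, which is precisely the adapted-metric argument (a metric of the form $|x-\bar x|+\theta|y-\bar y|$ with $\theta$ small) that you carry out explicitly. Your write-up just supplies the details the paper delegates to the citation, including the key observation that (\ref{conditionondataiki}) is exactly what makes the horizontal ratios $a_i^{12}, a_i^{21}$ less than $1$.
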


\begin{proof}  Condition (\ref{conditionondataiki}) and  \cite[Theorem 2.1]{BarnsleyBook} make each of $\omega_i^{\alpha\beta}$ a contraction with respect to a metric which is equivalent to the Euclidean metric. See \cite[Theorem 2.1]{BarnsleyBook} for more details.
\end{proof}

Depending on the parameters $d_i^{\alpha\beta}$, we have infinitely many GIFS $\left\{ \mathbb{R}^2 ;\omega_i^{\alpha\beta}\right\}$ realizing the graph $\mathcal{G}$.


\begin{example}\label{gifsexample}
   Consider the GIFS associated with data sets
   \begin{eqnarray*}
   D^1&=&\{(0,5), (1,4), (2,1), (3,1), (4,4), (5,5)\}\\
   D^2&=&\{(0,1), (1,2), (2,3), (3,2), (4,1)\}
   \end{eqnarray*}
   realizing the graph  with $K^{11}=3$, $K^{12}=2$, $K^{21}=1$, $K^{22}=3$ as shown in Figure~\ref{sonexamplegraph}.
   We choose $d_i^{\alpha\beta}=\dfrac{1}{3}$ for all $\alpha,\beta \in \{1,2\}$ and $i=1,...,K^{\alpha\beta}$. The attractors $A^1$ and $A^2$ of the system are given in Figure~\ref{gfifexample}. $A^1$ (resp $A^2$) is the the graph of a function which interpolates $D^1$ (resp. $D^2$).

  We also present the attractors of another GIFS which is obtained by changing only the vertical scaling factors as $d_1^{11}=d_2^{11}=d_3^{11}=0.25, d_1^{12}=d_2^{12}=\frac{1}{3}, d_1^{21}=\frac{1}{4}, d_1^{22}=d_2^{22}=d_3^{22}=\frac{1}{2}$ (see Figure \ref{gfifexampleek}).

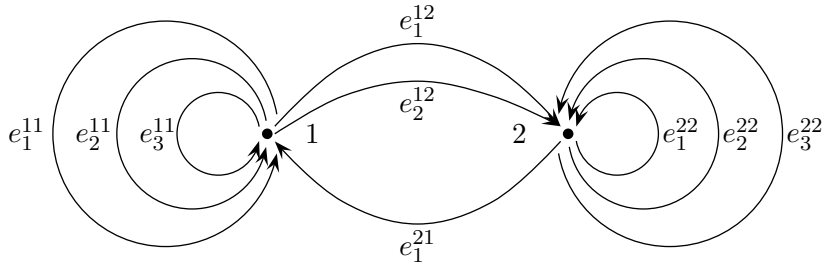
\begin{figure}[h]
\begin{center}
\begin{pspicture}(-3,-2)(7,2)
\pscurve[linewidth=0.5pt,arrowsize=5pt]{->}(0.1,0)(1,0.5)(2,0.7)(3,0.5)(3.9,0.1)
\pscurve[linewidth=0.5pt,arrowsize=5pt]{->}(0.1,0.1)(1,0.9)(2,1.2)(3,0.9)(3.9,0.1)
\psarcn[linewidth=0.5pt,arrowsize=5pt]{<-}(4.65,0){0.55}{170}{190}
\psarcn[linewidth=0.5pt,arrowsize=5pt]{<-}(5,0){1}{170}{190}
\psarcn[linewidth=0.5pt,arrowsize=5pt]{<-}(5.35,0){1.5}{170}{190}

\psarcn[linewidth=0.5pt,arrowsize=5pt]{<-}(-0.65,0){0.55}{350}{10}
\psarcn[linewidth=0.5pt,arrowsize=5pt]{<-}(-1,0){1}{350}{10}
\psarcn[linewidth=0.5pt,arrowsize=5pt]{<-}(-1.35,0){1.5}{350}{10}
\pscurve[linewidth=0.5pt,arrowsize=5pt]{<-}(0.1,-0.1)(1,-0.9)(2,-1.2)(3,-0.9)(3.9,-0.1)
\qdisk(0,0){2pt} \qdisk(4,0){2pt}
\rput(-3.2,0){$e_1^{11}$}\rput(-2.3,0){$e_2^{11}$}\rput(-1.45,0){$e_3^{11}$}
\rput(5.5,0){$e_{1}^{22}$} \rput(6.3,0){$e_{2}^{22}$}\rput(7.15,0){$e_{3}^{22}$}
\rput(2,1.5){$e_{1}^{12}$} \rput(2,0.4){$e_2^{12}$}
\rput(2,-1.5){$e_{1}^{21}$}
\rput(0.6,0){$1$}
\rput(3.35,0){$2$}
\end{pspicture}
\caption{The directed graph used in Example \ref{gifsexample}.}\label{sonexamplegraph}
\end{center}
\end{figure}

\begin{figure}[ht]
\begin{minipage}[b]{0.47\linewidth}
\centering
\includegraphics[width=7cm]{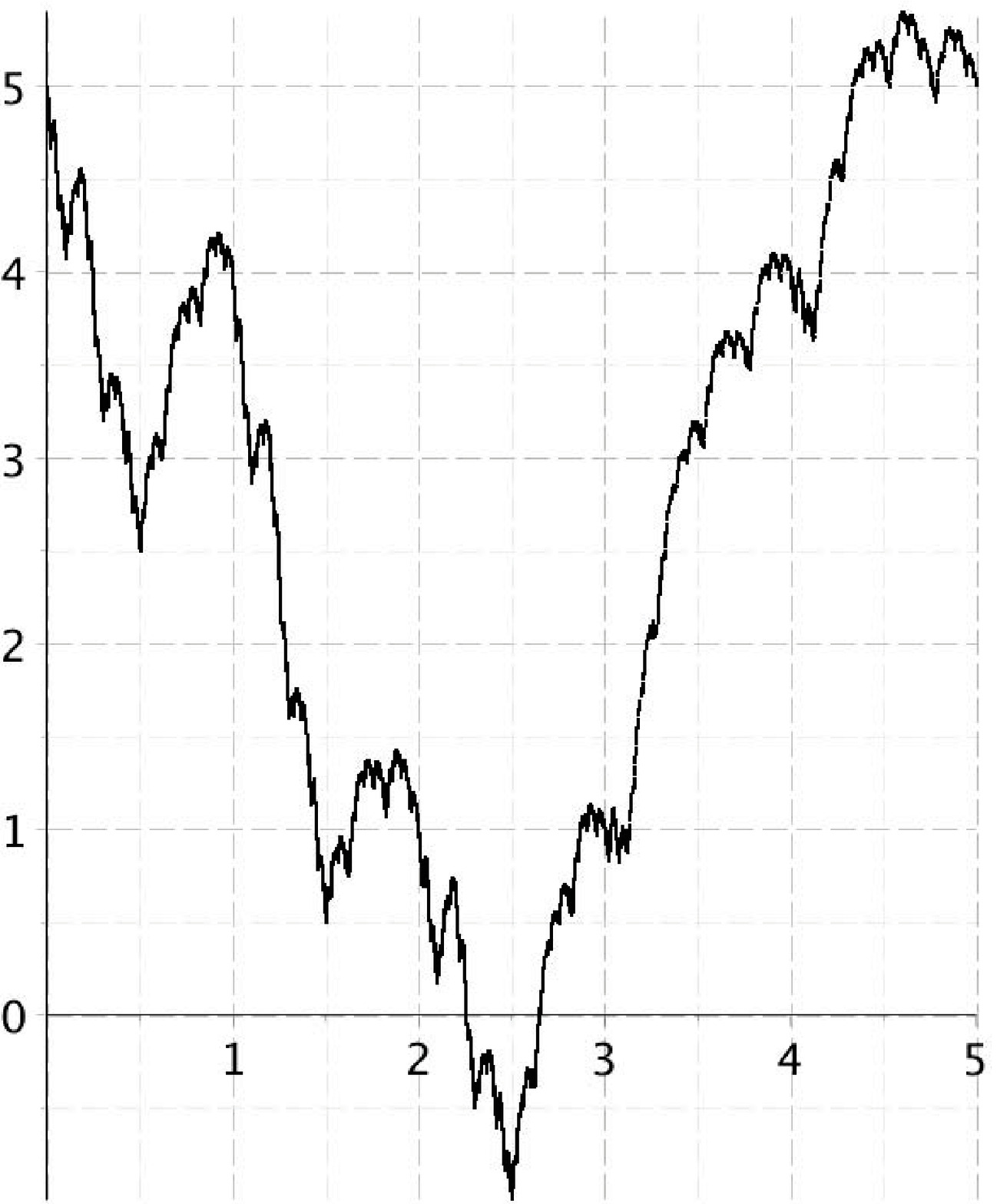}
\end{minipage}
\hspace{1cm}
\begin{minipage}[b]{0.47\linewidth}
\centering
\includegraphics[width=7cm]{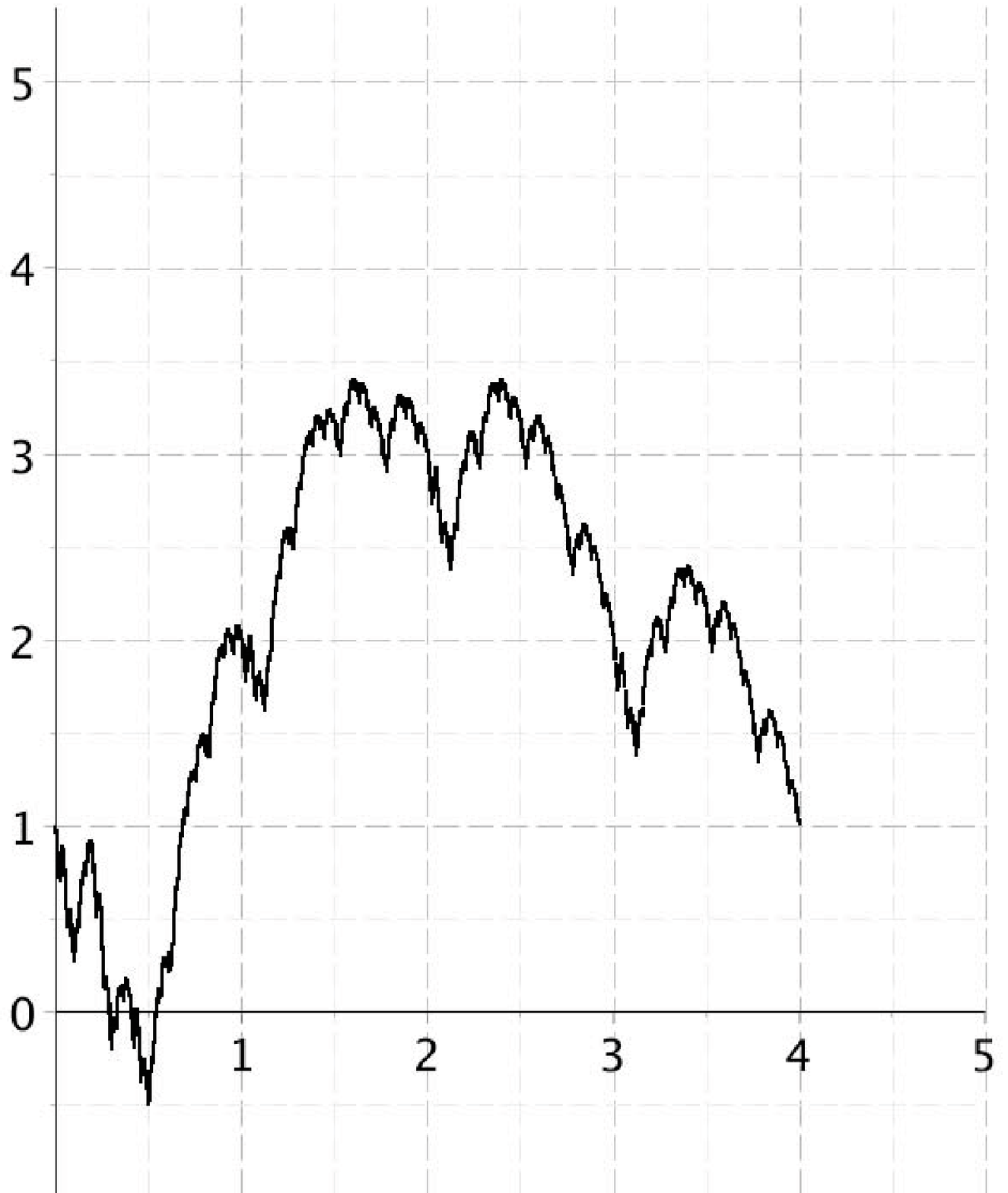}
\end{minipage}
\caption{The attractors $A^1$ (left) and $A^2$ (right) of the first GIFS given in Example \ref{gifsexample}.}\label{gfifexample}
\end{figure}

\begin{figure}[ht]
\begin{minipage}[b]{0.47\linewidth}
\centering
\includegraphics[width=7cm]{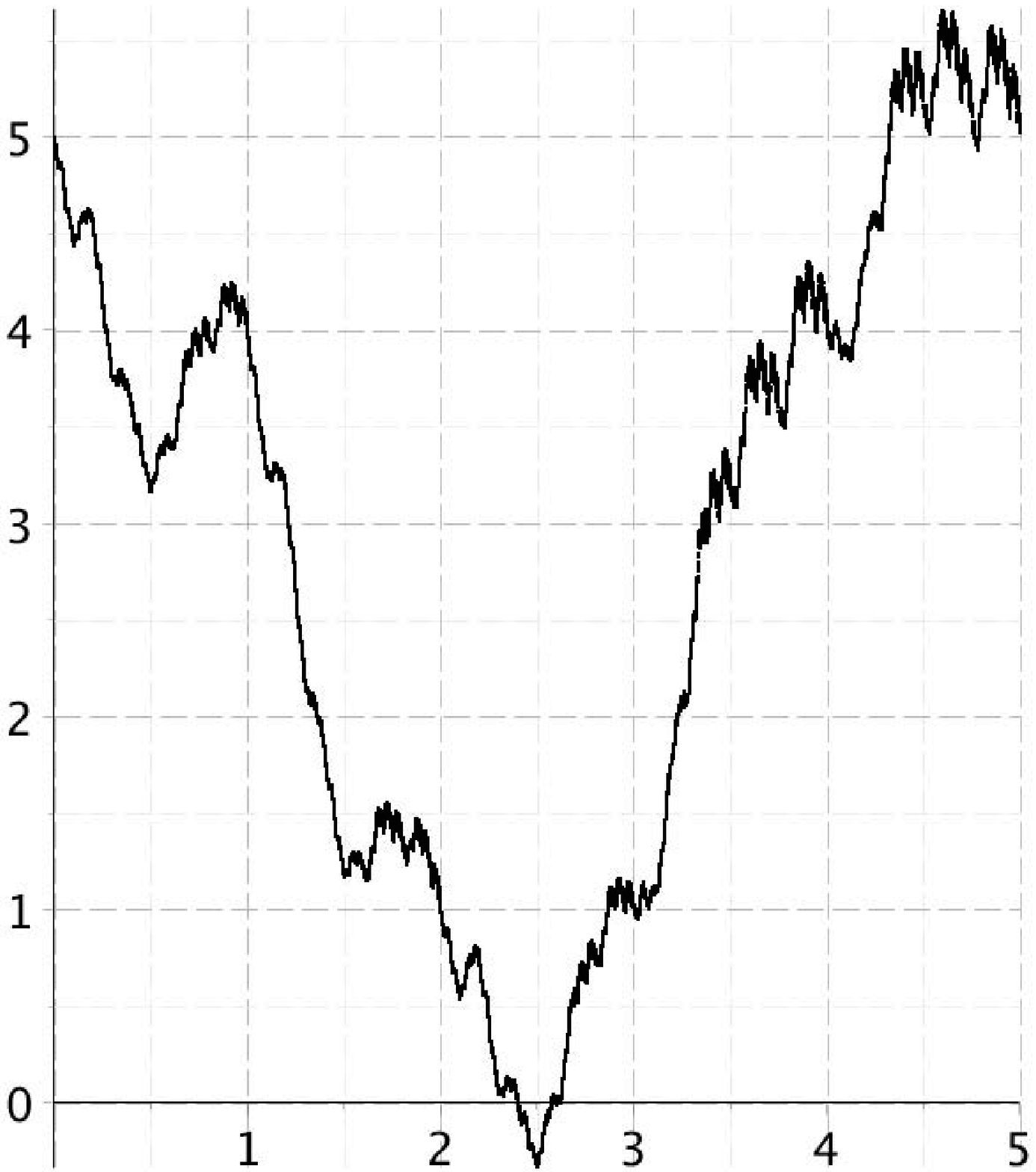}
\end{minipage}
\hspace{1cm}
\begin{minipage}[b]{0.47\linewidth}
\centering
\includegraphics[width=7cm]{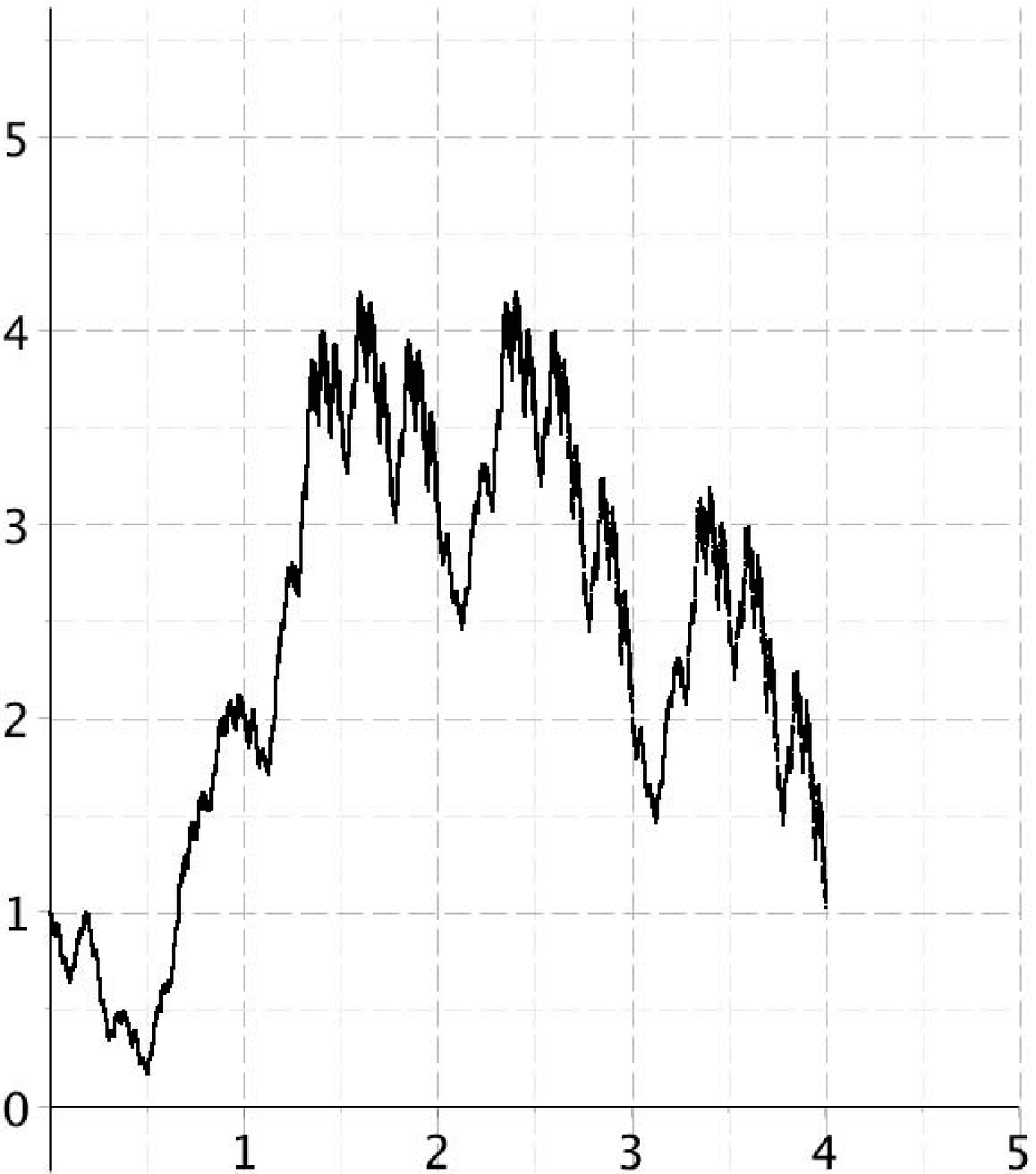}
\end{minipage}
\caption{The attractors of the second GIFS given in Example \ref{gifsexample} with different vertical scaling factors.}\label{gfifexampleek}
\end{figure}
\end{example}

\clearpage

\section{Proof of the Main Theorem}\label{main}
In this section, we prove that a GIFS constructed in Section~\ref{construction} satisfies Theorem \ref{maintheorem}. As mentioned before, we consider the case $n=2$.

Specifying the coefficients $d_i^{\alpha\beta}$ such that $|d_i^{\alpha\beta}|<1$ for all $\alpha, \beta \in V=\{1,2\}$ and $i=1,2,\dots,K^{\alpha \beta}$, let $\{ \mathbb{R}^2;\omega_i^{\alpha \beta}\}$ be a graph-directed iterated function system associated with the data sets $\mathfrak{D}^{\alpha}$, $(\alpha=1,2)$ constructed in Section \ref{construction}.  We prove that its attractor system $\{A^1,A^2\}$ is a pair of compact subsets of $\mathbb{R}^2$ such that  $A^1$ (resp. $A^2$) is the graph of a function which interpolates the data set $\mathfrak{D}^{1}$, (resp. $\mathfrak{D}^{2}$).

Let \[\mathcal{F}= \{ f \, | \, f: [x_0,x_N] \rightarrow \mathbb{R} \text{ continuous, } f(x_0)=F_0, f(x_N)=F_N\}\]
\[\mathcal{G}= \{ g \, | \, g: [y_0,y_M] \rightarrow \mathbb{R} \text{ continuous, } g(y_0)=G_0, g(y_M)=G_M\}\]
be metric spaces with the metrics
\[
d_\mathcal{F}(f_1,f_2)=\max\{\, |f_1(x)-f_2(x)| \ | \, x\in [x_0,x_N]\, \} \]
and
\[d_\mathcal{G}(g_1,g_2)=\max\{\, |g_1(x)-g_2(x)| \ | \, x\in [y_0,y_M]\, \} \] respectively. Note that $(\mathcal{F}, d_\mathcal{F})$ and $(\mathcal{G},d_\mathcal{G})$ are complete metric spaces, hence $\mathcal{F}\times \mathcal{G}$ is also complete with the metric $d$ where
\[d( (f_1,g_1),(f_2,g_2))=\max \{ d_\mathcal{F}(f_1,f_2), d_\mathcal{G}(g_1,g_2)\}.\]

We will define a contraction mapping on $\mathcal{F}\times \mathcal{G}$ using the coefficients $a_i^{\alpha\beta}, c_i^{\alpha\beta},e_i^{\alpha\beta},f_i^{\alpha\beta}$ and  the vertical scaling factors $d_i^{\alpha\beta} <1$ of the GIFS $\{ \mathbb{R}^2;\omega_i^{\alpha \beta}\}$  for $\alpha,\beta \in \{1,2\}$ and $i=1,2,\dots,K^{\alpha \beta}$.
Employing the following affine invertible functions
\begin{eqnarray*}
I_i: [x_0,x_N] \to [x_{i-1},x_i], \, I_i(x)=a_i^{11}x+e_i^{11}   & \text{ for } & i=1,...,K^{11}, \\
I_i: [y_0,y_M] \to [x_{i-1},x_i], \, I_i(x)=a_{i-K^{11}}^{12}\,x+e_{i-K^{11}}^{12}   & \text{ for } & i=K^{11}+1,...,N,\\
J_i: [x_0,x_N] \to [y_{i-1},y_i], \, J_i(x)=a_i^{21}x+e_i^{21}   & \text{ for } & i=1,...,K^{21},\\
J_i: [y_0,y_M] \to [y_{i-1},y_i], \, J_i(x)=a_{i-K^{21}}^{22}\,x+e_{i-K^{21}}^{22}   & \text{ for } & i=K^{12}+1,...,M
\end{eqnarray*}
we define the mapping
\begin{eqnarray*}
T: \mathcal{F} \times \mathcal{G} &\to& \mathcal{F} \times \mathcal{G} \\
T(f,g)(x,y)&=&(\tilde{f}(x),\tilde{g}(y))
\end{eqnarray*}
where
\[
\tilde{f}(x)=\left\{
\begin{array}{ll}
c_i^{11}I_i^{-1}(x)+d_i^{11}f(I_i^{-1}(x))+f_i^{11}  & ; \, x \in [x_{i-1},x_i] \text{ for }  i=1,...,K^{11} \\
c_{i-K^{11}}^{12}I_i^{-1}(x)+d_{i-K^{11}}^{12}g(I_i^{-1}(x))+f_{i-K^{11}}^{12}  &; \, x \in [x_{i-1},x_i] \text{ for }  i=K^{11}+1,...,N,
\end{array}
\right.
\]
\[
\tilde{g}(y)=\left\{
\begin{array}{ll}
c_j^{21}J_j^{-1}(y)+d_j^{21}f(J_j^{-1}(y))+f_j^{21}  &; \, y \in [y_{j-1},y_j] \text{ for } j=1,...,K^{21} \\
c_{j-K^{21}}^{22}J_j^{-1}(y)+d_{j-K^{21}}^{22}g(J_j^{-1}(y))+f_{j-K^{21}}^{22}    &; \, y \in [y_{j-1},y_j] \text{ for }  j=K^{21}+1,...,M.
\end{array}
\right.
\]

It can be easily seen that $\tilde{f}(x_0)=F_0$, $\tilde{f}(x_N)=F_N$, $\tilde{g}(y_0)=G_0$ and $\tilde{g}(y_M)=G_M$ using the equations (\ref{eq11})--(\ref{eq22}), thus to see that $T$ is well defined it is enough to show that $\tilde{f}$ and $\tilde{g}$ are continuous.

$\tilde{f}$ is continuous on the intervals $[x_{i-1},x_i]$ since $I_i^{-1}$ is continuous for $i=1,\dots,N$. So we must show that $\tilde{f}$ is also continuous at the points $\{x_1,x_2,\dots,x_{N-1}\}$ on which the function $\tilde{f}$ is defined by two different ways.

Note that for $i=1,2,\dots,K^{11}-1$
\begin{eqnarray*}
\tilde{f}(x_i)&=&c_i^{11}I_i^{-1}(x_i)+d_i^{11}f(I_i^{-1}(x_i))+f_i^{11} \label{ucf1} \\
\tilde{f}(x_i)&=&c_{i+1}^{11}I_{i+1}^{-1}(x_i)+d_{i+1}^{11}f(I_{i+1}^{-1}(x_i))+f_{i+1}^{11} \label{ucf2}
\end{eqnarray*}
are both equal $F_i$  since $I_i^{-1}(x_{i})=x_N$ and $I_{i+1}^{-1}(x_{i})=x_0$ by using (\ref{eq11}).

For $i=K^{11}+1,\dots, N-1$
\begin{eqnarray*}
\tilde{f}(x_i)&=&c_{i-K^{11}}^{12}I_i^{-1}(x_i)+d_{i-K^{11}}^{12}g(I_i^{-1}(x_i))+f_{i-K^{11}}^{12} \label{ucf3} \\
\tilde{f}(x_i)&=&c_{i+1-K^{11}}^{12}I_{i+1}^{-1}(x_i)+d_{i+1-K^{11}}^{12}g(I_{i+1}^{-1}(x_i))+f_{i+1-K^{11}}^{12} \label{ucf4}
\end{eqnarray*}
are both equal $F_i$  since $I_i^{-1}(x_{i})=y_M$ and $I_{i+1}^{-1}(x_{i})=y_0$ by using (\ref{eq12}).

Finally for $i=K^{11}$,
\begin{eqnarray*}
\tilde{f}(x_i)&=&c_i^{11}I_i^{-1}(x_i)+d_i^{11}f(I_i^{-1}(x_i))+f_i^{11} \label{ucf3} \\
\tilde{f}(x_i)&=&c_{i+1-K^{11}}^{12}I_{i+1}^{-1}(x_i)+d_{i+1-K^{11}}^{12}g(I_{i+1}^{-1}(x_i))+f_{i+1-K^{11}}^{12} \label{ucf4}
\end{eqnarray*}
are also both equal to $F_i$ since $I_i^{-1}(x_{i})=x_N$ and $I_{i+1}^{-1}(x_{i})=y_0$ by using (\ref{eq11}) and (\ref{eq12}) which shows the continuity of $\tilde {f}$.

Similarly, one can show that $\tilde{g}$ is continuous.

To see that $T$ is a contraction, we need to show that
\[d(T(f_1,g_1), T(f_2,g_2)) \leq r \, \max\{d_\mathcal{F}(f_1,f_2),d_\mathcal{G}(g_1,g_2) \}\]
for some $0<r<1$. Let $T(f_1,g_1)=(\tilde{f_1},\tilde{g_1})$ and $T(f_2,g_2)=(\tilde{f_2},\tilde{g_2})$.
Since
\begin{eqnarray*}
\max_{x\in [x_0,x_{K^{11}}]}\{|\tilde{f_1}(x)-\tilde{f_2}(x)|\}& =& \max_{i=1,...,K^{11}}\{|d_i^{11}| \ |f_1\left(I_i^{-1}(x)\right)- f_2\left(I_i^{-1}(x)\right)| \, \mid \, x\in[x_{i-1},x_i]\}\\&\leq & |d^{11}| \, d_\mathcal{F}(f_1,f_2)\\
\max_{x\in [x_{K^{11}},x_N]}\{|\tilde{f_1}(x)-\tilde{f_2}(x)|\} &=& \max_{i=K^{11}+1,...,N}\{|d_{i-K^{11}}^{12}| \ |g_1\left(I_i^{-1}(x)\right)- g_2\left(I_i^{-1}(x)\right)|\, \mid \, x\in[x_{i-1},x_i]\}\\
&\leq &|d^{12}| \, d_\mathcal{G}(g_1,g_2)
\end{eqnarray*}
where $d^{11}=\max \{ |d_i^{11}| \, \mid \, i=1,\dots,K^{11}\} $ and $d^{12}=\max \{ |d_i^{12}| \, \mid \, i=1,\dots,K^{12}\} $, we get
\[
d_\mathcal{F}(\tilde{f_1},\tilde{f_2}) \leq \max\{d^{11}, d^{12}\} \, \max\{ d_\mathcal{F}(f_1,f_2) , d_\mathcal{G}(g_1,g_2) \}.
\]

\noindent Similarly, we have
\[
d_\mathcal{G}(\tilde{g_1},\tilde{g_2}) \leq \max\{d^{21}, d^{22}\} \, \max\{ d_\mathcal{F}(f_1,f_2) , d_\mathcal{G}(g_1,g_2) \},\]
where $d^{21}=\max \{ |d_i^{21}| \, \mid \, i=1,\dots,K^{21}\} $ and $d^{22}=\max \{ |d_i^{22}| \, \mid \, i=1,\dots,K^{22}\} $. Using these results, we write
\[
d(T(f_1,g_1), T(f_2,g_2)) =  \max\{  d_\mathcal{F}(\tilde{f_1},\tilde{f_2}),  d_\mathcal{G}(\tilde{g_1},\tilde{g_2})\}\leq  r \, \max\{ d_\mathcal{F}(f_1,f_2) , d_\mathcal{G}(g_1,g_2)\},
\]
where $r=\max\{d^{11}, d^{12},d^{21}, d^{22}\}$ which is less than $1$.

From the Banach fixed point theorem,  $T$ has a unique fixed point, say $(f_0,g_0)$, that is $T(f_0,g_0)=(f_0,g_0)$. Let $F$ and $G$ be the graphs of $f_0$ and $g_0$ respectively.
Notice that for $x \in [x_0,x_N]$ and $y \in [y_0,y_M]$
\begin{eqnarray*}
f_0(a_i^{11}x+e_i^{11})= c_i^{11}x+d_i^{11}f_0(x)+f_i^{11} & \text{ for } &  i=1,...,K^{11}   \\
f_0(a_i^{12}y+e_i^{12})=c_i^{12}y+d_i^{12}g_0(y)+f_i^{12}   & \text{ for } &  i=1,...,K^{12}
\end{eqnarray*}
and
\begin{eqnarray*}
g_0(a_i^{21}x+e_i^{21})= c_i^{21}x+d_i^{21}f_0(x)+f_i^{21} & \text{ for } &  i=1,...,K^{21} \\
g_0(a_i^{22}y+e_i^{22})=c_i^{22}y+d_i^{22}g_0(y)+f_i^{22}   & \text{ for } &  i=1,...,K^{22}
\end{eqnarray*}
which imply that
\begin{eqnarray*}
F&=&\bigcup_{i=1}^{K^{11}} \omega_i^{11}(F) \cup  \bigcup_{i=1}^{K^{12}} \omega_i^{12}(G)\\
G&=&\bigcup_{i=1}^{K^{21}} \omega_i^{21}(F) \cup  \bigcup_{i=1}^{K^{22}} \omega_i^{22}(G).
\end{eqnarray*}
Since this equations system has a solution $(A^1,A^2)$, it must be $F=A^1$ and $G=A^2$ from the uniqueness of the solution. Thus the graphs of the fractal interpolation functions of $f_0$ and $g_0$ are the attractors $A^1$ and $A^2$ respectively.

\end{document}